\theoremstyle{plain}
\theoremstyle{remark}
\theoremstyle{plain}
 \newcommand{\bea}{\begin{eqnarray}}
\newcommand{\ena}{\end{eqnarray}}
\newcommand{\beas}{\begin{eqnarray*}}
\newcommand{\enas}{\end{eqnarray*}} 
\newtheorem{MLEexist}{Theorem}[section]
\newtheorem{Lemma}{Lemma}[section]
\newtheorem{Remark}{Remark}[section]
\newtheorem{Corollary}{Corollary}[section]
\title{Bounds for the normal approximation of the maximum likelihood estimator from $m$-dependent random variables} 
\author{Andreas Anastasiou
\\
\small{London School of Economics and Political Science, London, UK}
\\
\small{E-mail: A.Anastasiou@lse.ac.uk} 
}
\date{September 2016}
\begin{document}
\pagenumbering{roman}
\pagenumbering{arabic}
\maketitle

\begin{abstract}
\indent The asymptotic normality of the Maximum Likelihood Estimator (MLE) is a long established result. Explicit bounds for the distributional distance between the distribution of the MLE and the normal distribution have recently been obtained for the case of independent random variables. In this paper, a local dependence structure is introduced between the random variables and we give upper bounds which are specified for the Wasserstein metric.\end{abstract}

{\it Key words}: Maximum likelihood estimator; dependent random variables; normal approximation; Stein's method

\section{Introduction}
\label{sec:intro}
The asymptotic normality of Maximum Likelihood Estimators (MLEs) was first discussed in \cite{Fisher}. It is a fundamental qualitative result and a cornerstone in mathematical statistics. The aim of assessing the quality of this normal approximation in the case of independent random variables has recently been accomplished and this topic keeps attracting researchers' interest; see for example \cite{Anastasiou_Reinert}, \cite{Anastasiou_Ley}, \cite{Anastasiou_single} and \cite{Pinelis}. In this paper, the independence assumption is relaxed and we assess the normal approximation of the MLE under the presence of a local dependence structure between the random variables; for limit theorems for sums of $m$-dependent random variables see \cite{Heinrich}. For our purpose, we partly employ a powerful probabilistic technique called Stein's method, first introduced by Charles Stein in \cite{Stein1972}, while the monograph \cite{Steinmonograph} explains in detail the method and it is in our opinion the most notable contribution. Stein's method is used to assess whether a random variable, $W$, has a distribution close to a target distribution. In this paper, the normal approximation related to the MLE is assessed in terms of the Wasserstein distance. If $F, G$ are two random variables with values in $\mathbb{R}$ and
\begin{equation}
\label{classWasserstein}
H_W = \left\lbrace h:\mathbb{R}\rightarrow\mathbb{R}:|h(x) - h(y)|\leq|x-y|\right\rbrace
\end{equation}
is the set of Lipschitz functions with constant equal to one, then the Wasserstein distance between the laws of $F$ and $G$ is
\begin{equation}
\nonumber d_{W}\left(F,G\right) = \sup \left\lbrace|{\rm E}[h(F)] - {\rm E}[h(G)]|: h \in H_{W}\right\rbrace.
\end{equation}
A general approach is first developed to get upper bounds on the Wasserstein distance between the distribution of the suitably scaled MLE and the standard normal distribution; here Stein's method is used for some results. The special case of independent random variables is briefly discussed, while an example of normally distributed locally dependent random variables serves as an illustration of the main results.

The notion of local dependence is introduced before the Stein's method result that is used in the case of locally dependent random variables is given. An $m$-dependent sequence of random variables $\left\lbrace X_i, i \in \mathbb{N}\right\rbrace$ is such that for each $i \in \mathbb{N}$ the sets of random variables $\left\lbrace X_j, j\leq i \right\rbrace$ and $\left\lbrace X_j, j>i+m \right\rbrace$ are independent. The Stein's method result for the case of locally dependent random variables is based on the local dependence condition (LD); for a set of random variables $\left\lbrace \xi_i, i=1,2,\ldots,n \right\rbrace$, for any $A \subset \left\lbrace 1,2,\ldots,n \right\rbrace$ we define
\begin{equation}
\nonumber A^{c} = \left\lbrace i \in \left\lbrace 1,2,\ldots,n \right\rbrace:i \notin A \right\rbrace, \qquad\;\; \xi_{A} = \left\lbrace \xi_i:i\in A \right\rbrace.
\end{equation}
Then,
\begin{itemize}
\item[(LD)] For each $i \in \left\lbrace 1,2,\ldots,n \right\rbrace$ there exist $A_i\subset B_i\subset \left\lbrace 1,2,\ldots,n \right\rbrace$ such that $\xi_i$ is independent of $\xi_{A_i^c}$ and $\xi_{A_i}$ is independent of $\xi_{B_i^c}$.
\end{itemize}
Whenever this condition holds,
\begin{equation}
\label{notation_local}
\eta_i = \sum_{j\in A_i}^{}\xi_j, \qquad\qquad \tau_i = \sum_{j\in B_i}^{}\xi_j.
\end{equation}
Lemma \ref{Theorem_Chen_local} below gives an upper bound on the Wasserstein distance between the distribution of a sum of $m$-dependent random variables satisfying (LD) and the normal distribution. The random variables are assumed to have mean zero with the variance of their sum being equal to one. The proof of the lemma is beyond the scope of the thesis and can be found in \cite[p.134]{Chen_book}.
\vspace{0.1in}
\begin{Lemma}
\label{Theorem_Chen_local}
Let $\left\lbrace \xi_i, i=1,2,\ldots,n \right\rbrace$ be a set of random variables with mean zero and ${\rm Var}(W) = 1$, where $W = \sum_{i=1}^{n}\xi_i$. If (LD) holds, then with $\eta_i$ and $\tau_i$ as in \eqref{notation_local},
\begin{equation}
\label{bound_Chen_local}
d_{W}(W,Z) \leq 2\sum_{i=1}^{n}\left({\rm E}|\xi_i\eta_i\tau_i| + \left|{\rm E}(\xi_i\eta_i)\right|{\rm E}|\tau_i|\right) + \sum_{i=1}^{n}{\rm E}\left|\xi_i\eta_i^2\right|.
\end{equation}
\end{Lemma}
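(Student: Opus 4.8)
The plan is to run Stein's method directly on the test functions defining the Wasserstein distance. For a fixed $h \in H_W$, I would let $f = f_h$ denote the bounded solution of the Stein equation $f'(w) - wf(w) = h(w) - {\rm E}[h(Z)]$, where $Z$ is standard normal, and invoke the standard regularity bound $\|f_h''\|_\infty \le 2$ valid for Lipschitz-$1$ test functions. Evaluating at $w = W$ and taking expectations reduces the problem to controlling $|{\rm E}[f'(W) - Wf(W)]|$, since $d_{W}(W,Z) = \sup_{h\in H_W}|{\rm E}[f_h'(W) - Wf_h(W)]|$. It therefore suffices to bound $|{\rm E}[Wf(W)] - {\rm E}[f'(W)]|$ by the right-hand side of \eqref{bound_Chen_local}, uniformly over all such $f$.

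The core of the argument exploits the two layers of independence in (LD). First I would write ${\rm E}[Wf(W)] = \sum_{i=1}^n {\rm E}[\xi_i f(W)]$ and, since $\xi_i$ is independent of $\xi_{A_i^c}$ and hence of $W - \eta_i = \sum_{j\in A_i^c}\xi_j$, use ${\rm E}[\xi_i] = 0$ to subtract a mean-zero term: ${\rm E}[\xi_i f(W)] = {\rm E}[\xi_i(f(W) - f(W-\eta_i))]$. The same independence gives the normalisation ${\rm E}[\xi_i\eta_i] = {\rm E}[\xi_i W]$, so that $\sum_i {\rm E}[\xi_i\eta_i] = {\rm E}[W^2] = 1$ and thus ${\rm E}[f'(W)] = \sum_i {\rm E}[\xi_i\eta_i]\,{\rm E}[f'(W)]$. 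A first-order Taylor expansion $f(W) - f(W-\eta_i) = \eta_i f'(W) - R_i$, with the remainder controlled by $|R_i| \le \tfrac12\|f''\|_\infty \eta_i^2 \le \eta_i^2$, peels off exactly the term $\sum_i {\rm E}|\xi_i\eta_i^2|$ and leaves the leading contribution $\sum_i\bigl({\rm E}[\xi_i\eta_i f'(W)] - {\rm E}[\xi_i\eta_i]\,{\rm E}[f'(W)]\bigr)$.

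For this leading contribution I would bring in $\tau_i$ through the second independence relation. Since $\xi_i\eta_i$ is a measurable function of $\xi_{A_i}$, and $\xi_{A_i}$ is independent of $\xi_{B_i^c}$, the variable $\xi_i\eta_i$ is independent of $W - \tau_i = \sum_{j\in B_i^c}\xi_j$. Writing $f'(W) = f'(W-\tau_i) + (f'(W) - f'(W-\tau_i))$ and factoring the expectation on the independent piece yields
\begin{equation}
\nonumber
{\rm E}[\xi_i\eta_i f'(W)] - {\rm E}[\xi_i\eta_i]\,{\rm E}[f'(W)] = {\rm E}\bigl[\xi_i\eta_i(f'(W)-f'(W-\tau_i))\bigr] + {\rm E}[\xi_i\eta_i]\bigl({\rm E}[f'(W-\tau_i)] - {\rm E}[f'(W)]\bigr).
\end{equation}
Bounding both differences of $f'$ by $\|f''\|_\infty|\tau_i| \le 2|\tau_i|$ produces the two remaining terms $2\,{\rm E}|\xi_i\eta_i\tau_i|$ and $2\,|{\rm E}(\xi_i\eta_i)|\,{\rm E}|\tau_i|$. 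Summing over $i$ and combining with the Taylor term gives \eqref{bound_Chen_local}.

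I expect the main obstacle to be the measurability bookkeeping: one must verify carefully that $W - \eta_i$ depends only on $\xi_{A_i^c}$ (so it decouples from $\xi_i$) while $\xi_i\eta_i$ depends only on $\xi_{A_i}$ (so it decouples from $W-\tau_i$). Keeping straight which functional of the $\xi_j$'s is independent of which, and ensuring the two centring steps are applied in the correct order, is the delicate part; once the independence is correctly invoked, the analytic estimates are routine applications of the Taylor remainder and the Stein bound $\|f''\|_\infty\le 2$.
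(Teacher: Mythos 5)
Your proposal is correct: the two-stage use of (LD) (centring with $W-\eta_i$ via ${\rm E}[\xi_i]=0$, Taylor expansion with $\|f_h''\|\leq 2$, then decoupling $\xi_i\eta_i$ from $W-\tau_i$) is precisely the canonical Stein's-method proof of this bound, and all the independence and normalisation steps (in particular $\sum_i {\rm E}[\xi_i\eta_i] = {\rm Var}(W) = 1$) check out. Note that the paper does not prove the lemma itself but defers to \cite[p.134]{Chen_book}, where essentially this same argument is given, so your reconstruction matches the intended proof.
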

\vspace{0.05in}
Now the notation used throughout the paper is explained. First of all, $\theta$ is a scalar unknown parameter found in a parametric statistical model. Let $\theta_0$ be the true (still unknown) value of the parameter $\theta$ and let $\Theta \subset \mathbb{R}$ denote the parameter space, while $\boldsymbol{X} = (X_1, X_2, \ldots, X_n)$ for $\left\lbrace X_i, i=1,2,\ldots,n \right\rbrace$ an $m$-dependent sequence of identically distributed random variables. The joint density function of $X_1, X_2, \ldots, X_n$ is
\begin{align}
\nonumber f(\boldsymbol{x}|\theta) = L(\theta;\boldsymbol{x}) &= f(x_1;\theta)f(x_2|x_1;\theta)\ldots f(x_n|x_{n-1},\ldots,x_{n-m};\theta)\\
\nonumber & =f(x_1;\theta)\prod_{i=2}^{n}f(x_i|x_{i-1},\ldots,x_{m_i^*};\theta),
\end{align}
where $m_i^*=\max\left\lbrace i-m,1 \right\rbrace$. The likelihood function is $L(\theta; \boldsymbol{x}) = f(\boldsymbol{x}|\theta)$. Its natural logarithm, called the log-likelihood function is denoted by $l(\theta;\boldsymbol{x})$. Derivatives of the log-likelihood function, with respect to $\theta$, are denoted by $l'(\theta;\boldsymbol{x}),l''(\theta;\boldsymbol{x}),\ldots, l^{(j)}(\theta;\boldsymbol{x})$, for $j$ any integer greater than 2. The MLE is denoted by $\hat{\theta}_n(\boldsymbol{X})$. For many models the MLE exists and is unique; this is known as the `regular' case. For a number of statistical models, however, uniqueness or even existence of the MLE is not secured; see \cite{Billingsley} for an example of non-uniqueness. Assumptions that ensure existence and uniqueness of the MLE are given in \cite{Makelainen}.

In Section \ref{sec:bound_reg_not_satisfied} we explain, for locally dependent random variables, the process of finding an upper bound on the Wasserstein distance between the distribution of the suitably scaled MLE and the standard normal distribution. The quantity we are interested in is split into two terms with the one being bounded using Stein's method and the other using alternative techniques based mainly on Taylor expansions. After obtaining the general upper bound, we comment on how our bound behaves for i.i.d. ($m=0$) random variables and this specific result is compared to already existing bounds for i.i.d. random variables as given in \cite{Anastasiou_Reinert}. The main result of this paper is applied in Section \ref{sec:example_local_normal} to the case of 1-dependent normally distributed random variables.
\section{The general bound}
\label{sec:bound_reg_not_satisfied}
The purpose is to obtain an upper bound on the Wasserstein distance between the distribution of an appropriately scaled MLE and the standard normal distribution. The results of Lemma \ref{Theorem_Chen_local} will be applied to a sequence $\left\lbrace \xi_i, i=1,2,\ldots,n \right\rbrace$ of $2m$-dependent random variables. We denote by
\begin{equation}
\begin{aligned}
\nonumber & M_{1j}:= \max\left\lbrace 1,j-2m \right\rbrace \qquad\qquad M_{2j}:= \min\left\lbrace n,j+2m \right\rbrace\\
\nonumber & K_{1j}:= \max\left\lbrace 1,j-4m \right\rbrace \qquad\qquad \;K_{2j}:= \min\left\lbrace n,j+4m \right\rbrace.
\end{aligned}
\end{equation}
In addition, the dependency neighbourhoods, $A_j$ and $B_j$, as defined in (LD) are
\begin{equation}
\label{A_iB_i}
\begin{aligned}
& A_j = \left\lbrace M_{1j}, M_{1j} + 1, \ldots, M_{2j} - 1, M_{2j}\right\rbrace, \qquad B_j = \left\lbrace K_{1j}, K_{1j}+1, \ldots, K_{2j}-1, K_{2j}\right\rbrace.
\end{aligned}
\end{equation}
 Having that $\forall i \in \left\lbrace 1,2,\ldots,n \right\rbrace, \left|A_i\right|$ and $\left|B_i\right|$ denote the number of elements in the sets $A_i$ and $B_i$, respectively, then
\begin{equation}
\nonumber \left|A_i\right| \leq 4m+1, \qquad\quad \left|B_i\right| \leq 8m+1.
\end{equation}
{\raggedright{We work under the following assumptions:}}
\begin{itemize}[leftmargin=0.56in]
\item[(A.D.1)] The log-likelihood function is three times differentiable with uniformly bounded third derivative in $\theta\in \Theta$, $(x_1,x_2,\ldots,x_n) \in S$. The supremum is denoted by
\begin{equation}
\label{Sd}
S_d(n):=\sup_{\substack{\theta \in \Theta\\ \boldsymbol{x}\in S}}\left|l^{(3)}(\theta;\boldsymbol{x})\right| < \infty.
\end{equation}
\item[(A.D.2)] ${\rm E}\left[\frac{{\rm d}}{{\rm d}\theta}\log f(X_1|\theta)\right] = {\rm E}\left[\frac{{\rm d}}{{\rm d}\theta}\log f(X_i|X_{i-1},\ldots,X_{i-m};\theta)\right] = 0, \; {\rm for}\; i=2,3,\ldots,n$.
\item[(A.D.3)] With $\theta_0$, as usual, denoting the true value of the unknown parameter,
\begin{equation}
\nonumber \sqrt{n}{\rm E}\left[\hat{\theta}_n(\boldsymbol{X}) - \theta_0\right] \xrightarrow[{n\rightarrow \infty}]{{}} 0.
\end{equation}
\item[(A.D.4)] The limit of the reciprocal of $n {\rm Var}\left(\hat{\theta}_n(\boldsymbol{X})\right)$ exists and from now on, unless otherwise stated,
\begin{equation}
\nonumber 0 < i_2(\theta_0) = \lim_{n \rightarrow \infty} \frac{1}{n{\rm Var}(\hat{\theta}_n(\boldsymbol{X}))}.
\end{equation}
\end{itemize}
The following theorem gives the bound.
\vspace{0.1in}
\begin{MLEexist}
\label{Theorem_local_RD1}
Let $\left\lbrace X_i, i=1,2,\ldots,n \right\rbrace$ be an $m$-dependent sequence of identically distributed random variables with probability density (or mass) function $f(x_i|x_{i-1},\ldots,x_{i-m};\theta)$, where\linebreak $\theta \in \Theta$ and $(x_1,x_2,\ldots, x_n) \in S$, where $S$ is the support of the joint probability density (or  mass) function. Assume that $\hat{\theta}_n(\boldsymbol{X})$ exists and is unique. In addition, assume that (A.D.1)-(A.D.4) hold and that ${\rm Var}\left[l'(\theta_0;\boldsymbol{X})\right] > 0$. Let
\begin{equation}
\label{new_alpha}
\alpha:=\alpha(\theta_0,n):= \sqrt{\frac{{\rm Var}\left(l'(\theta_0;\boldsymbol{X})\right)}{{\rm Var}\left(\hat{\theta}_n(\boldsymbol{X})\right)}},
\end{equation}
which is assumed to be finite and not equal to zero. In addition, we denote by $$\xi_1 = \frac{\mathrm{d}}{\mathrm{d}\theta}\log f(X_1|\theta)\Big|_{\theta = \theta_0}\sqrt{\frac{n}{{\rm Var}(l'(\theta_0;\boldsymbol{X}))}}$$ and for $i=2,3,\ldots,n$, $$\xi_i = \frac{\mathrm{d}}{\mathrm{d}\theta}\log f(X_i|X_{i-1},\ldots,X_{i-m};\theta)\Big|_{\theta = \theta_0}\sqrt{\frac{n}{{\rm Var}(l'(\theta_0;\boldsymbol{X}))}}.$$ Then, for $Z \sim {\rm N}(0,1)$,
\begin{align}
\label{final_result_local_dependence_RD1}
\nonumber & d_{W}\left(\sqrt{n\,i_2(\theta_0)}\left(\hat{\theta}_n(\boldsymbol{X}) - \theta_0\right),Z\right) \leq \frac{2}{n^{\frac{3}{2}}}\sum_{i=1}^{n}\sum_{j \in A_i}\sum_{k \in B_i}\left[{\rm E}\left(\left(\xi_i\right)^4\right){\rm E}\left(\left(\xi_j\right)^4\right){\rm E}\left(\left(\xi_k\right)^4\right)\right]^{\frac{1}{4}}\\
\nonumber & \;\; + \frac{2}{n^{\frac{3}{2}}}\sum_{i=1}^{n}\sum_{j \in A_i}\sum_{k \in B_i}\left[{\rm E}\left(\left(\xi_i\right)^2\right){\rm E}\left(\left(\xi_j\right)^2\right){\rm E}\left(\left(\xi_k\right)^2\right)\right]^{\frac{1}{2}} + \frac{1}{n^{\frac{3}{2}}}\sum_{i=1}^{n}\left|A_i\right|\sum_{j \in A_i}\left[{\rm E}\left(\left(\xi_i\right)^2\right){\rm E}\left(\left(\xi_j\right)^4\right)\right]^{\frac{1}{2}}\\
\nonumber & \;\; + \left|\frac{\sqrt{n\,i_2(\theta_0){\rm Var}[l'(\theta_0;\boldsymbol{X})]}}{\alpha} -1 \right| + \frac{S_d(n)\sqrt{n\,i_2(\theta_0)}}{2\alpha}{\rm E}\left[\left(\hat{\theta}_n(\boldsymbol{X}) - \theta_0\right)^2\right]\\
& \;\; + \frac{\sqrt{n\,i_2(\theta_0)}}{\alpha}\sqrt{{\rm E}\left[\left(\hat{\theta}_n(\boldsymbol{X})- \theta_0\right)^2\right]}\sqrt{{\rm E}\left[\left(l''(\theta_0;\boldsymbol{X}) + \alpha\right)^2\right]}.
\end{align}
%
\end{MLEexist}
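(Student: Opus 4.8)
The plan is to split the target into a Stein-type term handled by Lemma \ref{Theorem_Chen_local} and a remainder coming from a Taylor expansion of the likelihood equation. Set
\[
W:=\frac{l'(\theta_0;\boldsymbol{X})}{\sqrt{{\rm Var}(l'(\theta_0;\boldsymbol{X}))}}=\sum_{i=1}^{n}\frac{\xi_i}{\sqrt{n}},
\]
which has mean zero by (A.D.2) and, by construction, variance one. First I would verify that the standardized scores $\xi_i/\sqrt{n}$ satisfy (LD) with the neighbourhoods $A_i,B_i$ of \eqref{A_iB_i}: each score is a function of $X_{i-m},\ldots,X_i$ and the $X_i$ are $m$-dependent, so two scores with indices $i<j$ are independent once $(j-m)-i>m$, i.e.\ once their indices differ by more than $2m$; hence the score sequence is $2m$-dependent, and the sets of sizes $|A_i|\le 4m+1$, $|B_i|\le 8m+1$ are exactly large enough for the two independence requirements in (LD) to hold. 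Applying Lemma \ref{Theorem_Chen_local} to $\{\xi_i/\sqrt{n}\}$ then bounds $d_{W}(W,Z)$ by the three sums in \eqref{bound_Chen_local}, with the lemma's aggregated quantities taken as $\eta_i=\sum_{j\in A_i}\xi_j/\sqrt{n}$ and $\tau_i=\sum_{k\in B_i}\xi_k/\sqrt{n}$.

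To reach the first three summands of \eqref{final_result_local_dependence_RD1} I would expand $\eta_i,\tau_i$, distribute the absolute values over the indices $j\in A_i$, $k\in B_i$, and bound each elementary expectation by moments of the individual $\xi$'s. For ${\rm E}|\xi_i\eta_i\tau_i|$ the generalized H\"older inequality with exponents $(3,3,3)$ followed by Lyapunov's inequality gives ${\rm E}|\xi_i\xi_j\xi_k|\le[{\rm E}(\xi_i^4){\rm E}(\xi_j^4){\rm E}(\xi_k^4)]^{1/4}$; for $|{\rm E}(\xi_i\eta_i)|{\rm E}|\tau_i|$ two Cauchy--Schwarz steps produce the corresponding product of second moments; and for ${\rm E}|\xi_i\eta_i^2|$ the bound $\eta_i^2\le|A_i|\sum_{j\in A_i}\xi_j^2$ together with Cauchy--Schwarz produces the factor $|A_i|$ and the mixed second/fourth moment. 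Carrying the $1/\sqrt{n}$ factors through each score then converts $\xi_i/\sqrt n$ back to the theorem's $\xi_i$ and yields precisely the $n^{-3/2}$ weights in the first three lines.

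For the remaining mass I would relate $T:=\sqrt{n\,i_2(\theta_0)}(\hat{\theta}_n(\boldsymbol{X})-\theta_0)$ to $W$. Expanding $0=l'(\hat{\theta}_n(\boldsymbol{X});\boldsymbol{X})$ about $\theta_0$ to third order, writing $l''(\theta_0;\boldsymbol{X})=(l''(\theta_0;\boldsymbol{X})+\alpha)-\alpha$ so as to isolate $\alpha(\hat{\theta}_n-\theta_0)$, and multiplying by $\sqrt{n\,i_2(\theta_0)}/\alpha$ gives
\[
T=\frac{\sqrt{n\,i_2(\theta_0){\rm Var}(l'(\theta_0;\boldsymbol{X}))}}{\alpha}\,W+\frac{\sqrt{n\,i_2(\theta_0)}}{\alpha}(\hat{\theta}_n-\theta_0)(l''(\theta_0;\boldsymbol{X})+\alpha)+\frac{\sqrt{n\,i_2(\theta_0)}}{2\alpha}(\hat{\theta}_n-\theta_0)^2 l^{(3)}(\theta^*;\boldsymbol{X}),
\]
with $\theta^*$ between $\hat{\theta}_n(\boldsymbol{X})$ and $\theta_0$. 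Since every $h\in H_W$ is $1$-Lipschitz, $d_{W}(T,Z)\le {\rm E}|T-W|+d_{W}(W,Z)$, and bounding ${\rm E}|T-W|$ from the displayed decomposition term by term — using ${\rm E}|W|\le 1$ for the constant mismatch (term four), $|l^{(3)}|\le S_d(n)$ for the quadratic remainder (term five), and Cauchy--Schwarz on $(\hat{\theta}_n-\theta_0)(l''(\theta_0;\boldsymbol{X})+\alpha)$ (term six) — yields the last three summands. I expect the main obstacle to be the bookkeeping of the second paragraph: confirming (LD) for the dependent scores and translating the compact bound \eqref{bound_Chen_local}, whose summands involve the aggregated $\eta_i,\tau_i$, into the fully expanded triple sums with the correct moment exponents and powers of $n$, since a careless H\"older split would give third rather than fourth moments or the wrong power of $n$.
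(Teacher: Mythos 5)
Your proposal is correct and follows essentially the same route as the paper: the same splitting into a Stein-method term for $W=l'(\theta_0;\boldsymbol{X})/\sqrt{{\rm Var}(l'(\theta_0;\boldsymbol{X}))}$ handled by Lemma \ref{Theorem_Chen_local} under (LD) with the neighbourhoods \eqref{A_iB_i}, the same H\"older/Cauchy--Schwarz/Jensen moment bounds yielding the three $n^{-3/2}$ sums, and the same Taylor expansion of the likelihood equation producing the scaling-mismatch and remainder terms (your single Lipschitz step $d_W(T,Z)\le {\rm E}|T-W|+d_W(W,Z)$ with ${\rm E}|W|\le 1$ is just a compressed version of the paper's two triangle inequalities and first-order expansions of $h$ with $\|h'\|\le 1$). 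The only blemish is terminological: the expansion of $l'$ about $\theta_0$ with remainder $\tfrac{1}{2}(\hat{\theta}_n(\boldsymbol{X})-\theta_0)^2 l^{(3)}(\theta^*;\boldsymbol{X})$ is second order, not third, but your displayed identity is exactly the paper's \eqref{relationship_local_dep}.
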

\begin{proof}
By the definition of the MLE and (A.D.1), $l'\left(\hat{\theta}_n(\boldsymbol{x});\boldsymbol{x}\right) = 0$. A second order Taylor expansion gives that
\begin{align}
\nonumber &\left(\hat{\theta}_n(\boldsymbol{X}) - \theta_0\right)l''(\theta_0;\boldsymbol{X}) = -l'(\theta_0;\boldsymbol{X}) - R_1(\theta_0;\boldsymbol{X})\\
\nonumber & \Rightarrow -\alpha\left(\hat{\theta}_n(\boldsymbol{X}) - \theta_0\right) = -l'(\theta_0;\boldsymbol{X}) - R_1(\theta_0;\boldsymbol{X}) - \left(\hat{\theta}_n(\boldsymbol{X}) - \theta_0\right)\left(l''(\theta_0;\boldsymbol{X}) + \alpha\right),
\end{align}
where
\begin{equation}
\nonumber R_1(\theta_0;\boldsymbol{X}) = \frac{1}{2}\left(\hat{\theta}_n(\boldsymbol{x})-\theta_0\right)^2l^{(3)}(\theta^*;\boldsymbol{x})
\end{equation}
is the remainder term with $\theta^{*}$ lying between $\hat{\theta}_n(\boldsymbol{x})$ and $\theta_0$. Multiplying both sides by $-\frac{\sqrt{n\,i_2(\theta_0)}}{\alpha}$,
\begin{equation}
\label{relationship_local_dep}
\sqrt{n\,i_2(\theta_0)}\left(\hat{\theta}_n(\boldsymbol{X})-\theta_0\right) = \frac{\sqrt{n\,i_2(\theta_0)}}{\alpha}\left[\vphantom{(\left(\sup_{\theta:|\theta-\theta_0|\leq\epsilon}\left|l^{(3)}(\theta;\boldsymbol{X})\right|\right)^2}l'(\theta_0;\boldsymbol{X})+R_1(\theta_0;\boldsymbol{X}) + \left(\hat{\theta}_n(\boldsymbol{X}) - \theta_0\right)\left(l''(\theta_0;\boldsymbol{X}) + \alpha\right)\vphantom{(\left(\sup_{\theta:|\theta-\theta_0|\leq\epsilon}\left|l^{(3)}(\theta;\boldsymbol{X})\right|\right)^2}\right].
\end{equation}
Applying the triangle inequality,
\begin{align}
\nonumber &\left|{\rm E}\left[h\left(\sqrt{n\,i_2(\theta_0)}\left(\hat{\theta}_n(\boldsymbol{X})-\theta_0\right)\right)\right] - {\rm E}[h(Z)]\right|\\
\label{first_term_localRD1}
&\leq \left|{\rm E}\left[h\left(\frac{\sqrt{n\,i_2(\theta_0)}l'(\theta_0;\boldsymbol{X})}{\alpha}\right)\right] - {\rm E}[h(Z)]\right|\\
\label{second_term_localRD1} &\;\;+ \left|{\rm E}\left[h\left(\sqrt{n\,i_2(\theta_0)}\left(\hat{\theta}_n(\boldsymbol{X}) - \theta_0\right)\right) - h\left(\frac{\sqrt{n\,i_2(\theta_0)}l'(\theta_0;\boldsymbol{X})}{\alpha}\right)\right]\right|.
\end{align}
\textbf{Step 1: Bound for} \eqref{first_term_localRD1}. Let, for ease of presentation $l'(\theta_0;\boldsymbol{X}) = \sum_{i=1}^{n}\tilde{\xi}_i$, where
\begin{align}
\nonumber & \tilde{\xi}_1 = \frac{\mathrm{d}}{\mathrm{d}\theta}\log f(X_1|\theta)\Big|_{\theta = \theta_0}, \qquad \tilde{\xi}_i = \frac{\mathrm{d}}{\mathrm{d}\theta}\log f(X_i|X_{i-1},\ldots,X_{i-m};\theta)\Big|_{\theta= \theta_0}\;\; {\rm for}\;\; i=2,3,\ldots,n.
\end{align}
Assumption (A.D.2) ensures that $\tilde{\xi}_i, i=1,2,\ldots,n$ have mean zero. Furthermore, for some function $g:\mathbb{R}^{m+1}\rightarrow\mathbb{R}$, it holds that $\tilde{\xi}_i = g(X_i,X_{i-1}, \ldots, X_{i-m})$ and taking into account that $\left\lbrace X_i, i=1,2,\ldots,n \right\rbrace$ is an $m$-dependent sequence, we conclude that $\left\lbrace \tilde{\xi}_i,i=1,2,\ldots,n \right\rbrace$ forms a $2m$-dependent sequence. Define now 
\begin{equation}
\label{W_local_depen}
W:=\frac{l'(\theta_0;\boldsymbol{X})}{\sqrt{{\rm Var}\left[l'(\theta_0;\boldsymbol{X})\right]}} = \sum_{i=1}^{n}\left(\frac{\xi_i}{\sqrt{n}}\right),
\end{equation}
with
$$\xi_i = \tilde{\xi}_i\sqrt{\frac{n}{{\rm Var}\left[l'(\theta_0;\boldsymbol{X})\right]}}, \;\forall i \in \left\lbrace 1,2,\ldots,n \right\rbrace.$$
It follows that $\left\lbrace \frac{\xi_i}{\sqrt{n}}, i=1,2,\ldots,n \right\rbrace$ is a random $2m$-dependent sequence with mean zero and also ${\rm Var}(W) = 1$. In addition, (LD) is satisfied with $A_j$ and $B_j$ as in \eqref{A_iB_i}. A simple triangle inequality gives that
\begin{align}
\label{first_term_local_1_RD1}
\eqref{first_term_localRD1} &\leq \left|{\rm E}[h(W)] - {\rm E}[h(Z)]\right|\\
\label{first_term_local_2_RD1}
&\;\; + \left|{\rm E}\left[h\left(\frac{\sqrt{n\,i_2(\theta_0)}l'(\theta_0;\boldsymbol{X})}{\alpha}\right)-h(W)\right]\right|.
\end{align}
Since the assumptions of Lemma \ref{Theorem_Chen_local} are satisfied for $W$ as in \eqref{W_local_depen}, one can directly use \eqref{bound_Chen_local} in order to find an upper bound for \eqref{first_term_local_1_RD1}. For \eqref{first_term_local_2_RD1}, a first order Taylor expansion of $h\left(\frac{\sqrt{n\,i_2(\theta_0)}l'(\theta_0;\boldsymbol{X})}{\alpha}\right)$ about $W$ yields
\begin{align}
\nonumber & h\left(\frac{\sqrt{n\,i_2(\theta_0)}l'(\theta_0;\boldsymbol{X})}{\alpha}\right) - h\left(\frac{l'(\theta_0;\boldsymbol{X})}{\sqrt{{\rm Var}(l'(\theta_0;\boldsymbol{X}))}}\right)\\
\nonumber & = \left(\frac{\sqrt{n\,i_2(\theta_0)}l'(\theta_0;\boldsymbol{X})}{\alpha} - \frac{l'(\theta_0;\boldsymbol{X})}{\sqrt{{\rm Var}(l'(\theta_0;\boldsymbol{X}))}}\right)h'(t_1(\boldsymbol{X})),
\end{align}
where $t_1(\boldsymbol{X})$ is between $\frac{\sqrt{n\,i_2(\theta_0)}l'(\theta_0;\boldsymbol{X})}{\alpha}$ and $\frac{l'(\theta_0;\boldsymbol{X})}{\sqrt{{\rm Var}(l'(\theta_0;\boldsymbol{X}))}}$. Therefore,
\begin{align}
\label{third_term_local_no_RD1}
\eqref{first_term_local_2_RD1} & \leq \|h'\|\left|\frac{\sqrt{n\,i_2(\theta_0)}}{\alpha}-\frac{1}{\sqrt{{\rm Var}(l'(\theta_0;\boldsymbol{X}))}}\right|{\rm E}\left|l'(\theta_0;\boldsymbol{X})\right| \leq \|h'\|\left|\frac{\sqrt{n\,i_2(\theta_0){\rm Var}(l'(\theta_0;\boldsymbol{X}))}}{\alpha} -1\right|.
\end{align}
For $h \in H_W$ as in \eqref{classWasserstein}, then $\|h'\| \leq 1$, which yields
\begin{align}
\label{chapter6mid0}
\nonumber \eqref{first_term_localRD1}&\leq \frac{2}{n^{\frac{3}{2}}}\left[\sum_{i=1}^{n}\left({\rm E}|\xi_i\eta_i\tau_i|\right) + \sum_{i=1}^{n}\left(\left|{\rm E}(\xi_i\eta_i)\right|{\rm E}|\tau_i|\right)\right] + \frac{1}{n^{\frac{3}{2}}}\sum_{i=1}^{n}{\rm E}\left|\xi_i\eta_i^2\right|\\
& \quad + \left|\frac{\sqrt{n\,i_2(\theta_0){\rm Var}(l'(\theta_0;\boldsymbol{X}))}}{\alpha} -1\right|,
\end{align}
with $\eta_i$ and $\tau_i$ as in \eqref{notation_local}. The absolute expectations in \eqref{chapter6mid0} can be difficult to bound and the first three quantities of the above bound are therefore expressed in terms of more easily calculable terms. For the first term in \eqref{chapter6mid0}, using H\"{o}lder's inequality
\begin{align}
\label{chapter6mid1}
\nonumber {\rm E}|\xi_i\eta_i\tau_i| & = {\rm E}\left|\xi_i\sum_{j\in A_i}\xi_j\sum_{k \in B_i}\xi_k\right| \leq \sum_{j \in A_i}\sum_{k \in B_i}{\rm E}\left|\xi_i\xi_j\xi_k\right| \leq \sum_{j \in A_i}\sum_{k \in B_i}\left[{\rm E}\left(\left|\xi_i\right|^3\right){\rm E}\left(\left|\xi_j\right|^3\right){\rm E}\left(\left|\xi_k\right|^3\right)\right]^{\frac{1}{3}}\\
& \leq \sum_{j \in A_i}\sum_{k \in B_i}\left[{\rm E}\left(\left(\xi_i\right)^4\right){\rm E}\left(\left(\xi_j\right)^4\right){\rm E}\left(\left(\xi_k\right)^4\right)\right]^{\frac{1}{4}}.
\end{align}
For the second term of the bound in \eqref{chapter6mid0}, the Cauchy-Schwarz inequality yields
\begin{align}
\label{chapter6mid2}
\nonumber \left|{\rm E}\left(\xi_i\eta_i\right)\right|{\rm E}\left|\tau_i\right| &= \left|{\rm E}\left(\xi_i\sum_{j \in A_i}\xi_j\right)\right|{\rm E}\left|\sum_{k \in B_i}\xi_k\right| \leq \sum_{j \in A_i}{\rm E}\left|\xi_i\xi_j\right|\sum_{k \in B_i}{\rm E}\left|\xi_k\right|\\
& \leq \sum_{j \in A_i}\sum_{k \in B_i}\left[{\rm E}\left(\left(\xi_i\right)^2\right){\rm E}\left(\left(\xi_j\right)^2\right){\rm E}\left(\left(\xi_k\right)^2\right)\right]^{\frac{1}{2}}.
\end{align}
For the third term, Jensen's inequality is employed to get that $$\left(\sum_{i \in J}|a_i|\right)^z \leq J^{z-1}\sum_{i\in J}^{}|a_i|^z,\; \forall a_i \in \mathbb{R}{\rm\;\; and\;\;}z\in\mathbb{N}$$ and therefore
\begin{align}
\label{chapter6mid3}
\nonumber {\rm E}\left|\xi_i\eta_i^2\right|&= {\rm E}\left|\xi_i\left(\sum_{j \in A_i}\xi_j\right)^2\right| \leq \left|A_i\right|{\rm E}\left|\xi_i\sum_{j \in A_i}\xi_j^2\right| \leq \left|A_i\right|\sum_{j \in A_i}{\rm E}\left|\xi_i\xi_j^2\right|\\
& \leq \left|A_i\right|\sum_{j \in A_i}\left[{\rm E}\left(\left(\xi_i\right)^2\right){\rm E}\left(\left(\xi_j\right)^4\right)\right]^{\frac{1}{2}}.
\end{align}
The results in \eqref{chapter6mid1}, \eqref{chapter6mid2} and \eqref{chapter6mid3} yield
\begin{align}
\label{bound_for_first_term_local_1}
\nonumber & \eqref{first_term_local_1_RD1} \leq \frac{2}{n^{\frac{3}{2}}}\left[\sum_{i=1}^{n}\left({\rm E}|\xi_i\eta_i\tau_i|\right) + \sum_{i=1}^{n}\left(\left|{\rm E}(\xi_i\eta_i)\right|{\rm E}|\tau_i|\right)\right] + \frac{1}{n^{\frac{3}{2}}}\sum_{i=1}^{n}{\rm E}\left|\xi_i\eta_i^2\right|\\
\nonumber & \leq \frac{2}{n^{\frac{3}{2}}}\sum_{i=1}^{n}\sum_{j \in A_i}\sum_{k \in B_i}\left[{\rm E}\left(\left(\xi_i\right)^4\right){\rm E}\left(\left(\xi_j^4\right)\right){\rm E}\left(\left(\xi_k\right)^4\right)\right]^{\frac{1}{4}}\\
& \;\; + \frac{2}{n^{\frac{3}{2}}}\sum_{i=1}^{n}\sum_{j \in A_i}\sum_{k \in B_i}\left[{\rm E}\left(\left(\xi_i\right)^2\right){\rm E}\left(\left(\xi_j\right)^2\right){\rm E}\left(\left(\xi_k\right)^2\right)\right]^{\frac{1}{2}} + \frac{1}{n^{\frac{3}{2}}}\sum_{i=1}^{n}\left|A_i\right|\sum_{j \in A_i}\left[{\rm E}\left(\left(\xi_i\right)^2\right){\rm E}\left(\left(\xi_j\right)^4\right)\right]^{\frac{1}{2}}.
\end{align}
The bound for \eqref{first_term_local_1_RD1} is now obviously a function only of ${\rm E}\left(\xi_i^2\right)$ and ${\rm E}\left(\xi_i^4\right)$.
\vspace{0.05in}
\\
\textbf{Step 2: Bound for} \eqref{second_term_localRD1}. The main tool used here is Taylor expansions. For ease of presentation, let
\begin{align}
\nonumber &\tilde{C}(\theta_0) = \tilde{C}(h,\theta_0;\boldsymbol{X}) := h\left(\sqrt{n\,i_2(\theta_0)}\left(\hat{\theta}_n(\boldsymbol{X}) - \theta_0\right)\right) - h\left(\frac{\sqrt{n\,i_2(\theta_0)}l'(\theta_0;\boldsymbol{X})}{\alpha}\right)\\
\nonumber & = h\left(\frac{\sqrt{n\,i_2(\theta_0)}\left[l'(\theta_0;\boldsymbol{X}) + R_1(\theta_0;\boldsymbol{X}) + \left(\hat{\theta}_n(\boldsymbol{X}) - \theta_0\right)\left(l''(\theta_0;\boldsymbol{X}) + \alpha\right)\right]}{\alpha}\right)\\
\nonumber &\;\;\;\;\; - h\left(\frac{\sqrt{n\,i_2(\theta_0)}l'(\theta_0;\boldsymbol{X})}{\alpha}\right)
\end{align}
using \eqref{relationship_local_dep}. A first order Taylor expansion of $h\left(\frac{\sqrt{n\,i_2(\theta_0)}\left[l'(\theta_0;\boldsymbol{X}) + R_1(\theta_0;\boldsymbol{X}) + \left(\hat{\theta}_n(\boldsymbol{X}) - \theta_0\right)\left(l''(\theta_0;\boldsymbol{X}) + \alpha\right)\right]}{\alpha}\right)$ about $\frac{\sqrt{n\,i_2(\theta_0)}l'(\theta_0;\boldsymbol{X})}{\alpha}$ yields
\begin{align}
\label{last_term_local_no_RD1}
\nonumber & \eqref{second_term_localRD1} = \left|{\rm E}\left[\tilde{C}(\theta_0)\right]\right| \leq \frac{\sqrt{n\,i_2(\theta_0)}}{\alpha}\|h'\|\left(\vphantom{(\left(\sup_{\theta:|\theta-\theta_0|\leq\epsilon}\left|l^{(3)}(\theta;\boldsymbol{X})\right|\right)^2}{\rm E}\left[\frac{1}{2}\left(\hat{\theta}_n(\boldsymbol{X})- \theta_0\right)^2\left|l^{(3)}(\theta^*;\boldsymbol{X})\right|\right]\right.\\
\nonumber & \qquad\qquad\qquad\qquad\qquad\qquad\qquad\qquad\quad\left. + {\rm E}\left|\left(\hat{\theta}_n(\boldsymbol{X}) - \theta_0\right)\left(l''(\theta_0;\boldsymbol{X}) + \alpha\right)\right|\vphantom{(\left(\sup_{\theta:|\theta-\theta_0|\leq\epsilon}\left|l^{(3)}(\theta;\boldsymbol{X})\right|\right)^2}\right)\\
& \leq \frac{\sqrt{n\,i_2(\theta_0)}}{\alpha}\|h'\|\left(\vphantom{(\left(\sup_{\theta:|\theta-\theta_0|\leq\epsilon}\left|l^{(3)}(\theta;\boldsymbol{X})\right|\right)^2}\frac{S_d(n)}{2}{\rm E}\left[\left(\hat{\theta}_n(\boldsymbol{X})- \theta_0\right)^2\right] + \sqrt{{\rm E}\left[\left(\hat{\theta}_n(\boldsymbol{X})- \theta_0\right)^2\right]}\sqrt{{\rm E}\left[\left(l''(\theta_0;\boldsymbol{X}) + \alpha\right)^2\right]}\vphantom{(\left(\sup_{\theta:|\theta-\theta_0|\leq\epsilon}\left|l^{(3)}(\theta;\boldsymbol{X})\right|\right)^2}\right),
\end{align}
where for the last step Cauchy-Schwarz inequality has been used while $S_d(n)$ is as in \eqref{Sd}. We conclude that \eqref{third_term_local_no_RD1}, \eqref{bound_for_first_term_local_1} and \eqref{last_term_local_no_RD1} yield, for $h \in H_W$, the assertion of the theorem as expressed in \eqref{final_result_local_dependence_RD1}.
\end{proof}
The following corollary specifies the result of
Theorem \ref{Theorem_local_RD1} for the simple scenario of i.i.d. random variables. This allows for a comparison with the bound given in \cite{Anastasiou_Reinert}, which is for i.i.d. random variables. The proof of the corollary is a result of simple steps and therefore only an outline is provided.
\begin{Corollary}
\label{Corollaryindependence}
Let $X_1, X_2, \ldots, X_n$ be i.i.d. random variables with probability density (or mass) function $f(x|\theta)$. Assume that $\hat{\theta}_n(\boldsymbol{X})$ exists and is unique and that (A.D.1)-(A.D.4) hold. In addition, ${\rm Var}\left[l'(\theta_0;\boldsymbol{X})\right] > 0$. For $\alpha$ as in \eqref{new_alpha} and $Z \sim {\rm N}(0,1)$,
\begin{align}
\label{final_result_independence_RD1}
\nonumber & d_{W}\left(\sqrt{n\,i_2(\theta_0)}\left(\hat{\theta}_n(\boldsymbol{X}) - \theta_0\right),Z\right) \leq \frac{5{\rm E}\left|\frac{{\mathrm{d}}}{{\mathrm{d}}\theta}\log f\left(X_1|\theta_0\right)\right|^3}{\sqrt{n}\left[{\rm Var}\left(\frac{{\mathrm{d}}}{{\mathrm{d}}\theta}\log f\left(X_1|\theta_0\right)\right)\right]^{\frac{3}{2}}}\\
\nonumber & + \left|\frac{n\sqrt{i_2(\theta_0){\rm Var}\left(\frac{{\mathrm{d}}}{{\mathrm{d}}\theta}\log f\left(X_1|\theta_0\right)\right)}}{\alpha} -1 \right| + \frac{S_d(n)\sqrt{n\,i_2(\theta_0)}}{2\alpha}{\rm E}\left[\left(\hat{\theta}_n(\boldsymbol{X}) - \theta_0\right)^2\right]\\
& \;\; + \frac{\sqrt{n\,i_2(\theta_0)}}{\alpha}\sqrt{{\rm E}\left[\left(\hat{\theta}_n(\boldsymbol{X})- \theta_0\right)^2\right]}\sqrt{{\rm E}\left[\left(l''(\theta_0;\boldsymbol{X}) + \alpha\right)^2\right]}.
\end{align}
\end{Corollary}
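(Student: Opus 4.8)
The plan is to specialise Theorem~\ref{Theorem_local_RD1} to $m=0$ and simplify. Setting $m=0$ in the definitions of $M_{1j},M_{2j},K_{1j},K_{2j}$ gives $A_i=B_i=\{i\}$ for every $i$, so that $|A_i|=|B_i|=1$ and, in the notation of \eqref{notation_local}, $\eta_i=\tau_i=\xi_i$. Since $m=0$ means $f(x_i|x_{i-1},\ldots,x_{i-m};\theta)=f(x_i|\theta)$, all the $\xi_i$ are identically distributed with ${\rm E}(\xi_i^2)=1$. Rather than inheriting the fourth-moment bound \eqref{bound_for_first_term_local_1}, I would re-apply Lemma~\ref{Theorem_Chen_local} to $W$ directly (equivalently, use the intermediate bound \eqref{chapter6mid0}): once the dependency neighbourhoods are singletons the three ``hard'' absolute expectations collapse to third absolute moments and H\"older's inequality is no longer needed.

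Concretely, with $\eta_i=\tau_i=\xi_i$ and ${\rm E}(\xi_i^2)=1$ one reads off
$$ {\rm E}|\xi_i\eta_i\tau_i|={\rm E}|\xi_i|^3,\qquad |{\rm E}(\xi_i\eta_i)|\,{\rm E}|\tau_i|={\rm E}(\xi_i^2)\,{\rm E}|\xi_i|={\rm E}|\xi_i|,\qquad {\rm E}|\xi_i\eta_i^2|={\rm E}|\xi_i|^3. $$
Because the $\xi_i$ are identically distributed, each sum over $i$ equals $n$ times its $i=1$ term, and the $\tfrac{1}{n^{3/2}}$ prefactors of \eqref{chapter6mid0} produce
$$ \eqref{first_term_local_1_RD1}\ \le\ \frac{1}{\sqrt n}\left(3\,{\rm E}|\xi_1|^3+2\,{\rm E}|\xi_1|\right). $$
To merge the two contributions into the single constant $5$, I would invoke the normalisation ${\rm E}(\xi_1^2)=1$ together with the Lyapunov (power-mean) inequality: $({\rm E}\xi_1^2)^{1/2}\le({\rm E}|\xi_1|^3)^{1/3}$ gives ${\rm E}|\xi_1|^3\ge({\rm E}\xi_1^2)^{3/2}=1$, while ${\rm E}|\xi_1|\le({\rm E}\xi_1^2)^{1/2}=1$, so that ${\rm E}|\xi_1|\le{\rm E}|\xi_1|^3$ and hence $3\,{\rm E}|\xi_1|^3+2\,{\rm E}|\xi_1|\le5\,{\rm E}|\xi_1|^3$. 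Substituting $\xi_1=\frac{{\rm d}}{{\rm d}\theta}\log f(X_1|\theta_0)\big/\sqrt{{\rm Var}\big(\frac{{\rm d}}{{\rm d}\theta}\log f(X_1|\theta_0)\big)}$ then yields exactly the first term of \eqref{final_result_independence_RD1}.

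For the remaining terms I would use that, by independence, ${\rm Var}(l'(\theta_0;\boldsymbol X))=n\,{\rm Var}\!\big(\frac{{\rm d}}{{\rm d}\theta}\log f(X_1|\theta_0)\big)$. Inserting this into the factor $\big|\tfrac{\sqrt{n\,i_2(\theta_0){\rm Var}(l'(\theta_0;\boldsymbol X))}}{\alpha}-1\big|$ coming from \eqref{third_term_local_no_RD1} collapses $\sqrt{n\cdot n\,{\rm Var}(\cdots)}$ to $n\sqrt{{\rm Var}(\cdots)}$ and reproduces the second term of the corollary. The last two terms originate from Step~2 of the proof of Theorem~\ref{Theorem_local_RD1}, namely the bound \eqref{last_term_local_no_RD1}; this step makes no reference to the dependency neighbourhoods and so carries over verbatim.

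The only step that is not pure substitution is the passage from the coefficients $(3,2)$ to the single constant $5$, which rests on the mean-zero/unit-variance normalisation and the Lyapunov inequality ${\rm E}|\xi_1|\le{\rm E}|\xi_1|^3$; everything else follows directly from setting $m=0$ and replacing each sum of identically distributed summands by $n$ copies of its first term.
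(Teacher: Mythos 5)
Your proposal is correct and follows essentially the same route as the paper: specialise the dependency neighbourhoods to singletons ($m=0$), apply the Lemma \ref{Theorem_Chen_local} bound, and merge the resulting third- and first-moment terms into the single constant $5$ via H\"older/Lyapunov (the paper uses ${\rm E}(\xi_i^2){\rm E}|\xi_i|\leq {\rm E}|\xi_i|^3$ directly, which is the same inequality you derive through the unit-variance normalisation), with the remaining terms inherited verbatim from Steps 1--2 of Theorem \ref{Theorem_local_RD1} after substituting ${\rm Var}(l'(\theta_0;\boldsymbol{X}))=n\,{\rm Var}\big(\tfrac{\mathrm{d}}{\mathrm{d}\theta}\log f(X_1|\theta_0)\big)$.
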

{\raggedright{{\textit{Outline of the proof}}.}} A similar process as the one followed in the proof of Theorem \ref{Theorem_local_RD1} shows that a bound is obtained by bounding the terms \eqref{first_term_local_1_RD1}, \eqref{first_term_local_2_RD1} and \eqref{second_term_localRD1}. For independent random variables, applying H\"{o}lder's inequality to the bound in \eqref{bound_Chen_local}, where now
$$W = \sum_{i=1}^{n}\left(\frac{\xi_i}{\sqrt{n}}\right), \qquad \xi_i = \frac{\mathrm{d}}{\mathrm{d}\theta}\log f(X_i|\theta)\Big|_{\theta = \theta_0}\sqrt{\frac{n}{{\rm Var}(l'(\theta_0;\boldsymbol{X}))}},$$ leads to $$\eqref{first_term_local_1_RD1} \leq \frac{5}{n^{\frac{3}{2}}}\sum_{i=1}^{n}{\rm E}\left|\xi_i\right|^3 = \frac{5{\rm E}\left|\frac{\mathrm{d}}{\mathrm{d}\theta}\log f(X_1|\theta_0)\right|^3}{\sqrt{n}\left[{\rm Var}\left(\frac{\mathrm{d}}{\mathrm{d}\theta}\log f(X_1|\theta_0)\right)\right]^{\frac{3}{2}}}.$$ The second term of the bound in \eqref{final_result_independence_RD1} is the special form of \eqref{third_term_local_no_RD1} for the case of i.i.d. random variables, while the last two terms are as in the result of Theorem \ref{Theorem_local_RD1}.
\begin{Remark}
The bound in \eqref{final_result_independence_RD1} is not as simple and sharp as the bound given in Theorem 2.1 of \cite{Anastasiou_Reinert}. This is expected since Corollary \ref{Corollaryindependence} is a special application of a result which was originally obtained to satisfy the assumption of local dependence for our random variables, while \cite{Anastasiou_Reinert} used directly results of Stein's method for independent random variables. In addition, the assumption (A.D.1) used for the result of Corollary \ref{Corollaryindependence} is stronger than the condition (R3) of \cite{Anastasiou_Reinert}. Using uniform boundedness of the third derivative of the log-likelihood function in (A.D.1) allows us to get bounds on the Wasserstein distance related to the MLE. On the other hand, \cite{Anastasiou_Reinert} relaxed this condition and assumed that the third derivative of the log-likelihood function is bounded in an area of $\theta_0$. This lead to bounds on the bounded Wasserstein (or Fortet-Mourier) distance; see \cite{Nourdin} for a definition of this metric.
\end{Remark} 
\section{Example: 1-dependent normal random variables}
\label{sec:example_local_normal}
To illustrate the general results, as an example assume that we have a sequence $\left\lbrace S_1,S_2,\ldots,S_n\right\rbrace$ of random variables where for $k\in\mathbb{Z}^+$ and $\forall j \in \left\lbrace 1,2,\ldots,n \right\rbrace$,
\begin{equation}
\nonumber S_j = \sum_{i=(j-1)k}^{jk}X_i,
\end{equation}
for $X_i, i=0,1,2,\ldots,nk\;$ i.i.d. random variables from the ${\rm N}(\mu,\sigma^2)$ distribution with $\mu=\theta\in\mathbb{R}$ being the unknown parameter and $\sigma^2$ is known. Hence $S_j$ and $S_{j+1}$ share one summand, $X_{jk}$. For $\delta \in \mathbb{Z} \setminus \left\lbrace 0 \right\rbrace$, we have that
\begin{equation}
\nonumber {\rm Cov}(S_i, S_{i+\delta}) =
\begin{cases}
{\rm Var}(X_1) = \sigma^2, & \text{if $|\delta| = 1$}\\
0, & \text{if $|\delta| > 1$}.
\end{cases}
\end{equation}
Therefore, $\left\lbrace S_i \right\rbrace_{i=1,2,\ldots,n}$ is a 1-dependent sequence of random variables. Furthermore,
\begin{equation}
\label{distributionS1}
S_i \sim {\rm N}((k+1)\theta,(k+1)\sigma^2)
\end{equation}
as it is a sum of $k+1$ independent normally distributed random variables with mean $\theta$ and variance $\sigma^2$. As
\begin{equation}
\nonumber \rho = \frac{{\rm Cov}(S_{i-1},S_i)}{\sqrt{{\rm Var}(S_{i-1}){\rm Var}(S_i)}} = \frac{\sigma^2}{(k+1)\sigma^2} = \frac{1}{k+1}, \; \forall i\in \left\lbrace 2,3,\ldots,n \right\rbrace,
\end{equation}
it is standard, see \cite[p.177]{Casella}, that for $i = 2,3,\ldots,n$
\begin{equation}
\label{distributionSi}
\left(S_i|S_{i-1}=s_{i-1}\right) \sim {\rm N}\left((k+1)\theta + \frac{1}{k+1}\left(s_{i-1}-(k+1)\theta\right),\frac{k(k+2)}{k+1}\sigma^2\right).
\end{equation}
After basic steps, the likelihood function for the parameter $\theta$ under $\boldsymbol{S} = (S_1,S_2, \ldots, S_n)$ is
\begin{align}
\nonumber L(\theta;\boldsymbol{S}) & = f(S_1|\theta)\prod_{i=2}^{n}f(S_i|S_{i-1};\theta)\\
\nonumber & = \frac{(k+1)^{\frac{n-1}{2}}}{\sqrt{2\pi(k+1)\sigma^2}(2\pi k(k+2)\sigma^2)^{\frac{n-1}{2}}}\exp\left\lbrace\vphantom{(\left(\sup_{\theta:|\theta-\theta_0|\leq\epsilon}\left|l^{(3)}(\theta;\boldsymbol{X})\right|\right)^2} -\frac{\left(S_1-(k+1)\theta\right)^2}{2(k+1)\sigma^2}\right.\\
\nonumber & \left.\qquad\qquad\quad -\frac{k+1}{2k(k+2)\sigma^2}\sum_{i=2}^{n}\left(S_i-\left(\left(k+1\right)\theta+\frac{1}{k+1}(S_{i-1}-(k+1)\theta)\right)\right)^2\vphantom{(\left(\sup_{\theta:|\theta-\theta_0|\leq\epsilon}\left|l^{(3)}(\theta;\boldsymbol{X})\right|\right)^2}\right\rbrace.
\end{align} 
Having this closed-form expression for the likelihood allows us to derive the MLE under this local dependence structure. The unique MLE for $\theta$ is
\begin{equation}
\label{MLE_local}
\hat{\theta}_n(\boldsymbol{S}) = \frac{k\sum_{i=1}^{n}S_i + S_1 + S_n}{(nk+2)(k+1)}.
\end{equation}
In addition, the first three derivatives of the log-likelihood function are
\begin{align}
\label{score_local_example}
\nonumber & l'(\theta;\boldsymbol{S}) = \frac{1}{(k+2)\sigma^2}\left\lbrace k\sum_{i=1}^{n}S_i + S_1 + S_n -(k+1)(nk+2)\theta\right\rbrace\\
\nonumber & l''(\theta;\boldsymbol{S}) = -\frac{(nk+2)(k+1)}{(k+2)\sigma^2}\\
& l^{(3)}(\theta;\boldsymbol{S}) = 0.
\end{align}
The following Corollary gives the upper bound on the Wasserstein distance between the distribution of $\hat{\theta}_n(\boldsymbol{S})$ and the normal distribution.
\vspace{0.1in}
\begin{Corollary}
Let $S_1, S_2, \ldots S_n$ be a 1-dependent sequence of random variables with \linebreak $S_i \sim {\rm N}((k+1)\theta,(k+1)\sigma^2)$. The conditions (A.D.1)-(A.D.4) hold. For $Z \sim {\rm N}(0,1)$ and $i_2(\theta_0) = \frac{(k+1)^2}{(k+3)\sigma^2}$,
\begin{align}
\nonumber & d_W\left(\sqrt{n\,i_2(\theta_0)}\left(\hat{\theta}_n(\boldsymbol{S})-\theta_0\right),Z\right)\leq 339(n-5)\left[\frac{k(k+1)(k+2)}{(nk^3+(3n+2)k^2+10k+2)}\right]^{\frac{3}{2}}\\
\nonumber & \;\; + \frac{(k+1)^{\frac{3}{2}}(k+2)^{\frac{3}{2}}}{(nk^3+(3n+2)k^2+10k+2)^{\frac{3}{2}}}\left\lbrace\vphantom{(\left(\sup_{\theta:|\theta-\theta_0|\leq\epsilon}\left|l^{(3)}(\theta;\boldsymbol{X})\right|\right)^2} \left(1+3^{\frac{3}{4}}\right)\left(2\sqrt{k+2}(37k+2)+4\sqrt{k}(61k+8)\right)\right.\\
\nonumber & \qquad\quad\qquad\qquad\qquad\qquad\qquad\qquad\qquad\left. +\sqrt{3}\left(3\sqrt{k+2}(k+1)+\sqrt{k}(91k+18)\right)\vphantom{(\left(\sup_{\theta:|\theta-\theta_0|\leq\epsilon}\left|l^{(3)}(\theta;\boldsymbol{X})\right|\right)^2}\right\rbrace\\
\nonumber & \;\; + \left|\left(1-\frac{2}{nk+2}\right)\left[\frac{k+3+\frac{2}{n}+\frac{10}{nk}+\frac{2}{nk^2}}{k+3}\right]^{\frac{1}{2}} - 1\right|.
\end{align}
\end{Corollary}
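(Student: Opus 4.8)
The plan is to apply Theorem \ref{Theorem_local_RD1} directly and to show that, in this Gaussian $1$-dependent model, four of the six terms in \eqref{final_result_local_dependence_RD1} either vanish or collapse into the three stated quantities. Since $\{S_i\}$ is $1$-dependent we take $m=1$, so the scores form a $2$-dependent sequence and the neighbourhoods \eqref{A_iB_i} satisfy $|A_i|\le 5$ and $|B_i|\le 9$. First I would dispose of the two analytic remainder terms. From \eqref{score_local_example} we have $l^{(3)}(\theta;\boldsymbol{S})=0$, so $S_d(n)=0$ in \eqref{Sd} and the term carrying $S_d(n)$ drops out. Moreover $l''(\theta_0;\boldsymbol{S})$ is deterministic, so the expansion leading to \eqref{relationship_local_dep} is exact with $R_1\equiv 0$, giving $\hat{\theta}_n(\boldsymbol{S})-\theta_0=-l'(\theta_0;\boldsymbol{S})/l''(\theta_0;\boldsymbol{S})$. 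Hence ${\rm Var}(\hat{\theta}_n(\boldsymbol{S}))={\rm Var}(l'(\theta_0;\boldsymbol{S}))/\left(l''(\theta_0;\boldsymbol{S})\right)^2$, and from \eqref{new_alpha} one reads off $\alpha=-l''(\theta_0;\boldsymbol{S})=\frac{(nk+2)(k+1)}{(k+2)\sigma^2}$. Therefore $l''(\theta_0;\boldsymbol{S})+\alpha=0$ and the last term of \eqref{final_result_local_dependence_RD1} vanishes as well.

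Next I would evaluate the surviving deterministic factor $\left|\frac{\sqrt{n\,i_2(\theta_0){\rm Var}(l'(\theta_0;\boldsymbol{S}))}}{\alpha}-1\right|$. Writing $l'(\theta_0;\boldsymbol{S})=\frac{1}{(k+2)\sigma^2}\sum_i c_iS_i$ with $c_1=c_n=k+1$ and $c_i=k$ otherwise, a direct computation using ${\rm Var}(S_i)=(k+1)\sigma^2$, ${\rm Cov}(S_i,S_{i+1})=\sigma^2$ and ${\rm Cov}(S_i,S_j)=0$ for $|i-j|>1$ gives ${\rm Var}(l'(\theta_0;\boldsymbol{S}))=\frac{D}{(k+2)^2\sigma^2}$, where $D:=nk^3+(3n+2)k^2+10k+2$. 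Substituting this together with $\alpha$ and $i_2(\theta_0)=\frac{(k+1)^2}{(k+3)\sigma^2}$, the factor simplifies to $\frac{\sqrt{nD}}{\sqrt{k+3}\,(nk+2)}$, which, using $1-\frac{2}{nk+2}=\frac{nk}{nk+2}$ and $k+3+\frac{2}{n}+\frac{10}{nk}+\frac{2}{nk^2}=\frac{D}{nk^2}$, rearranges to the last term of the claimed bound.

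It remains to control the three Stein terms of \eqref{final_result_local_dependence_RD1}. Using \eqref{distributionS1} and \eqref{distributionSi}, each $\tilde{\xi}_i$ is a centred Gaussian: $\tilde{\xi}_1=\frac{S_1-(k+1)\theta_0}{\sigma^2}\sim {\rm N}\!\left(0,\tfrac{k+1}{\sigma^2}\right)$, whereas for $i\ge 2$, $\tilde{\xi}_i\sim {\rm N}\!\left(0,\tfrac{k(k+1)}{(k+2)\sigma^2}\right)$ since the conditional variance in \eqref{distributionSi} is free of $\theta$. After the rescaling $\xi_i=\tilde{\xi}_i\sqrt{n/{\rm Var}(l'(\theta_0;\boldsymbol{S}))}$ and the Gaussian identity ${\rm E}(\xi_i^4)=3({\rm E}\xi_i^2)^2$, the generic second moment is ${\rm E}(\xi_i^2)=\frac{nk(k+1)(k+2)}{D}$ for $i\ge 2$, while the index $i=1$ carries the strictly larger value $\frac{n(k+1)(k+2)^2}{D}$ (missing one power of $k$). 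These closed forms reduce each Stein sum to a moment-free expression in $k,n,D$.

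Finally I would split the outer sum at the point where the index $1$ can enter a neighbourhood. For $i\ge 6$ one has $1\notin B_i$ and every $j\in A_i$, $k\in B_i$ satisfies $j,k\ge 2$, so all moments take their generic value; bounding $|A_i|\le 5$, $|B_i|\le 9$ (an overcount that also absorbs right-boundary truncation) makes each such index contribute at most $\bigl(90\cdot 3^{3/4}+90+25\sqrt{3}\bigr)\left[\frac{k(k+1)(k+2)}{D}\right]^{3/2}<339\left[\frac{k(k+1)(k+2)}{D}\right]^{3/2}$, and summing over the $n-5$ indices $i\in\{6,\ldots,n\}$ yields the first term. For the remaining indices $i\in\{1,2,3,4,5\}$ the index $1$ (with its larger, $k$-deficient moment) lies in $A_i$ or $B_i$ and the neighbourhoods are truncated on the left; evaluating the three Stein sums explicitly for these five indices and separating the factors equal to $\xi_1$ from the generic ones produces the common prefactor $\frac{(k+1)^{3/2}(k+2)^{3/2}}{D^{3/2}}$ together with the middle term. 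I expect this last step to be the main obstacle: it is pure boundary bookkeeping, requiring one to track, for each $i\le 5$ and each of the three sums, the truncated neighbourhood sizes and the exact number of factors equal to $\xi_1$ rather than a generic $\xi_j$; this is precisely what generates the integer coefficients ($37k+2$, $61k+8$, $91k+18$, and the $\sqrt{k}$ versus $\sqrt{k+2}$ splitting) appearing there. Collecting the first term, the middle term and the deterministic factor of the second paragraph gives the stated specialisation of \eqref{final_result_local_dependence_RD1}.
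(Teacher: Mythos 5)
Your proposal is correct and follows essentially the same route as the paper's own proof: the same vanishing of the $S_d(n)$ and $l''(\theta_0;\boldsymbol{S})+\alpha$ terms (your derivation of $\alpha=-l''(\theta_0;\boldsymbol{S})$ from the exactness of the Taylor expansion is in fact a slightly slicker route than the paper's separate computation of ${\rm Var}(l'(\theta_0;\boldsymbol{S}))$ and ${\rm Var}(\hat{\theta}_n(\boldsymbol{S}))$), the same simplification of the deterministic fourth term, the same Gaussian moments for $\xi_1$ and $\xi_i$, $i\geq 2$, and the same split of the Stein sums at $i=6$ yielding the constant $339$ for the $n-5$ generic indices. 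The only part you leave unexecuted is the paper's Cases 2--6, i.e.\ the explicit evaluation of $Q_1,\ldots,Q_5$ in \eqref{case1_i_1}--\eqref{case1_i_5}, which is precisely the boundary bookkeeping that produces the coefficients $37k+2$, $61k+8$ and $91k+18$ of the middle term; since your plan for that step (track the truncated neighbourhoods and the number of factors equal to $\xi_1$, then use the computed moments) is exactly what the paper carries out, this is incompleteness of execution rather than a gap in the approach.
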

\begin{Remark}
The order of the bound with respect to the sample size is $\frac{1}{\sqrt{n}}$.
\end{Remark}
\begin{proof}
We first check that the assumptions (A.D.1)-(A.D.4) are satisfied. The first assumption is satisfied from \eqref{score_local_example} with $S_d(n) = 0$. From \eqref{distributionS1} and \eqref{distributionSi}, simple steps yield ${\rm E}\left[\frac{{\rm d}}{{\rm d}\theta}\log f(S_1|\theta)\right] = {\rm E}\left[\frac{{\rm d}}{{\rm d}\theta}\log f(S_i|S_{i-1};\theta)\right] = 0$ and thus (A.D.2) holds. The assumption (A.D.3) is also satisfied since, using \eqref{distributionS1} and \eqref{MLE_local},
\begin{equation}
\nonumber {\rm E}\left[\hat{\theta}_n(\boldsymbol{S})\right] = \frac{nk(k+1)\theta_0 + 2(k+1)\theta_0}{(nk+2)(k+1)} = \theta_0.
\end{equation}
To show that (A.D.4) holds, we first calculate
\begin{align}
\label{midstepMLEexamplelocal}
\nonumber {\rm Var}\left[\hat{\theta}_n(\boldsymbol{S})\right]& = \frac{1}{(nk+2)^2(k+1)^2}{\rm Var}\left(k\sum_{i=1}^{n}S_i + S_1 + S_n\right)\\
\nonumber & = \frac{1}{(nk+2)^2(k+1)^2}\left\lbrace\vphantom{(\left(\sup_{\theta:|\theta-\theta_0|\leq\epsilon}\left|l^{(3)}(\theta;\boldsymbol{X})\right|\right)^2} k^2{\rm Var}\left(\sum_{i=1}^{n}S_i\right) +{\rm Var}(S_1) + {\rm Var}(S_n) + 2k{\rm Cov}\left(S_1,\sum_{i=1}^{n}S_i\right)\right.\\
& \qquad\qquad\qquad\qquad\qquad\quad\left. + 2k{\rm Cov}\left(S_n,\sum_{i=1}^{n}S_i\right)\vphantom{(\left(\sup_{\theta:|\theta-\theta_0|\leq\epsilon}\left|l^{(3)}(\theta;\boldsymbol{X})\right|\right)^2}\right\rbrace.
\end{align}
From \eqref{distributionS1}, ${\rm Var}(S_i) = (k+1)\sigma^2, \forall i \in \left\lbrace 1,2,\ldots,n \right\rbrace$. In addition, since $\left\lbrace S_i\right\rbrace_{i=1,2,\ldots,n}$ is a 1-dependent sequence of random variables,
\begin{align}
\label{var_score_mid_2}
\nonumber & {\rm Var}\left(\sum_{i=1}^{n}S_i\right) = n{\rm Var}(S_1)+2(n-1){\rm Cov}(S_1,S_2) = n(k+1)\sigma^2 + 2(n-1)\sigma^2\\
& {\rm Cov}\left(S_1,\sum_{i=1}^{n}S_i\right) = {\rm Var}(S_1) + {\rm Cov}(S_1,S_2) = (k+2)\sigma^2.
\end{align}
Applying the above results of \eqref{var_score_mid_2} to \eqref{midstepMLEexamplelocal} gives that
\begin{equation}
\label{varianceMLElocalfin}
{\rm Var}\left[\hat{\theta}_n(\boldsymbol{S})\right] = \frac{\sigma^2\left(nk^3 +3nk^2+2k^2+10k+2\right)}{(nk+2)^2(k+1)^2}.
\end{equation}
Therefore,
\begin{align}
\label{Fisher_example_noRD1}
\nonumber i_2(\theta_0) &= \lim_{n \rightarrow \infty} \frac{1}{n{\rm Var}\left(\hat{\theta}_n(\boldsymbol{S})\right)} = \lim_{n \rightarrow \infty}\frac{(nk+2)^2(k+1)^2}{n\sigma^2(nk^3+3nk^2+2k^2+10k+2)}\\
& = \lim_{n \rightarrow \infty} \frac{n^2(k+1)^2\left(k^2+ \frac{4k}{n}+\frac{4}{n^2}\right)}{n^2\sigma^2\left(k^3+3k^2 + \frac{2k^2}{n} + \frac{10k}{n} + \frac{2}{n}\right)} = \frac{(k+1)^2}{(k+3)\sigma^2} > 0,
\end{align}
which shows that (A.D.4) is satisfied. To obtain $\alpha$ as defined in \eqref{new_alpha}, the variance of the score function is calculated which, after simple steps and using \eqref{var_score_mid_2}, is
\begin{align}
\label{var_score_fin}
{\rm Var}\left[l'(\theta_0;\boldsymbol{S})\right] = \frac{1}{(k+2)^2\sigma^2}\left[nk^3+3nk^2+2k^2+10k+2\right].
\end{align}
The above result and \eqref{varianceMLElocalfin} yield
\begin{equation}
\label{alpha_example_noRD1}
\alpha = \sqrt{\frac{{\rm Var}\left[l'(\theta_0;\boldsymbol{S})\right]}{{\rm Var}\left[\hat{\theta}_n(\boldsymbol{S})\right]}} = \sqrt{\frac{(nk+2)^2(k+1)^2}{\sigma^4(k+2)^2}} = \frac{(nk+2)(k+1)}{(k+2)\sigma^2}.
\end{equation}
For $\xi_1= \frac{\mathrm{d}}{\mathrm{d}\theta}\log f(S_1|\theta)\Big|_{\theta=\theta_0}\sqrt{\frac{n}{{\rm Var}\left[l'(\theta_0;\boldsymbol{S})\right]}}$, $\xi_i = \frac{\mathrm{d}}{\mathrm{d}\theta}\log f(S_i|S_{i-1};\theta)\Big|_{\theta=\theta_0}\sqrt{\frac{n}{{\rm Var}\left[l'(\theta_0;\boldsymbol{S})\right]}}, i=2,3,\ldots,n$, using \eqref{var_score_fin} and \eqref{distributionS1}, we get that $$\xi_1 = \frac{\sqrt{n}(k+2)[S_1 - (k+1)\theta_0]}{\sigma\sqrt{nk^3+(3n+2)k^2+10k+2}}$$ and therefore
\begin{align}
\label{results_x1}
\nonumber & {\rm E}\left(\xi_1^2\right) = \frac{n(k+2)^2{\rm E}(S_1-(k+1)\theta)^2}{(nk^3+(3n+2)k^2+10k+2)\sigma^2}= \frac{n(k+2)^2(k+1)}{nk^3+(3n+2)k^2+10k+2}\\
& {\rm E}\left(\xi_1^4\right) = \frac{n^2(k+2)^4{\rm E}(S_1-(k+1)\theta)^4}{(nk^3+(3n+2)k^2+10k+2)^2\sigma^4} = \frac{3n^2(k+2)^4(k+1)^2}{(nk^3+(3n+2)k^2+10k+2)^2}.
\end{align}
Furthermore, for $i=2,3,\ldots,n$, the results in \eqref{var_score_fin} and \eqref{distributionSi} yield
\begin{equation}
\nonumber \xi_i = \frac{\sqrt{n}(k+1)\left[S_i - \left((k+1)\theta+\frac{1}{k+1}\left(S_{i-1}-(k+1)\theta\right)\right)\right]}{\sigma\sqrt{nk^3+(3n+2)k^2+10k+2}}
\end{equation}
so that
\begin{align}
\label{results_xi}
\nonumber & {\rm E}\left(\xi_i^2\right) = \frac{n(k+1)^2{\rm E}\left[S_i - \left((k+1)\theta+\frac{1}{k+1}\left(S_{i-1}-(k+1)\theta\right)\right)\right]^2}{\sigma^2(nk^3+(3n+2)k^2+10k+2)}  = \frac{nk(k+1)(k+2)}{nk^3+(3n+2)k^2+10k+2}\\
\nonumber & {\rm E}\left(\xi_i^4\right) = \frac{n^2(k+1)^4{\rm E}\left[S_i - \left((k+1)\theta+\frac{1}{k+1}\left(S_{i-1}-(k+1)\theta\right)\right)\right]^4}{\sigma^4(nk^3+(3n+2)k^2+10k+2)^2}\\
& \qquad\;\;\; = \frac{3n^2k^2(k+1)^2(k+2)^2}{(nk^3+(3n+2)k^2+10k+2)^2}.
\end{align}
The first three terms of the general bound \eqref{final_result_local_dependence_RD1} are now calculated. These are denoted from now on by
\begin{align}
\nonumber &Q_v = Q_v(k,n):=\frac{1}{n^{\frac{3}{2}}}\left\lbrace\vphantom{(\left(\sup_{\theta:|\theta-\theta_0|\leq\epsilon}\left|l^{(3)}(\theta;\boldsymbol{X})\right|\right)^2}2\sum_{j \in A_v}\sum_{l \in B_v}\left[{\rm E}\left(\left(\xi_v\right)^2\right){\rm E}\left(\left(\xi_j\right)^2\right){\rm E}\left(\left(\xi_l\right)^2\right)\right]^{\frac{1}{2}}\right.\\
\nonumber &\left.\; + 2\sum_{j \in A_v}\sum_{l \in B_v}\left[{\rm E}\left(\left(\xi_v\right)^4\right){\rm E}\left(\left(\xi_j\right)^4\right){\rm E}\left(\left(\xi_l\right)^4\right)\right]^{\frac{1}{4}} + \left|A_v\right|\sum_{j \in A_v}\left[{\rm E}\left(\left(\xi_v\right)^2\right){\rm E}\left(\left(\xi_j\right)^4\right)\right]^{\frac{1}{2}}\vphantom{(\left(\sup_{\theta:|\theta-\theta_0|\leq\epsilon}\left|l^{(3)}(\theta;\boldsymbol{X})\right|\right)^2}\right\rbrace.
\end{align}
Our approach is split depending on whether 1 is an element of either $A_i$ or $B_i$ as defined in \eqref{A_iB_i} for $i \in \left\lbrace 1,2,\ldots,n \right\rbrace$.\\
{\textbf{Case 1:}} $i=6,7,\ldots,n$. Using the results in \eqref{results_xi} and since $\left|A_i\right| \leq 5$, $\left|B_i\right|\leq 9$, $\forall i \in \left\lbrace 6,7,\ldots,n \right\rbrace$,
\begin{align}
\label{case1_i_i}
\nonumber & Q_i \leq \frac{1}{n^{\frac{3}{2}}}\left\lbrace\vphantom{(\left(\sup_{\theta:|\theta-\theta_0|\leq\epsilon}\left|l^{(3)}(\theta;\boldsymbol{X})\right|\right)^2}90\left[{\rm E}\left(\left(\xi_2\right)^4\right)\right]^{\frac{3}{4}} + 90\left[{\rm E}\left(\left(\xi_2\right)^2\right)\right]^{\frac{3}{2}} + 25\left[{\rm E}\left(\left(\xi_2\right)^2\right){\rm E}\left(\left(\xi_2\right)^4\right)\right]^{\frac{1}{2}}\vphantom{(\left(\sup_{\theta:|\theta-\theta_0|\leq\epsilon}\left|l^{(3)}(\theta;\boldsymbol{X})\right|\right)^2}\right\rbrace\\
\nonumber & = \left[\frac{k(k+1)(k+2)}{(nk^3+(3n+2)k^2+10k+2)}\right]^{\frac{3}{2}}\left(90(3)^{\frac{3}{4}}+90+25\sqrt{3}\right)\\
& < 339\left[\frac{k(k+1)(k+2)}{(nk^3+(3n+2)k^2+10k+2)}\right]^{\frac{3}{2}}.
\end{align}
Issues arise due to $\xi_1$ not having the same distribution as $\xi_i$ for $i \in \left\lbrace 2,3,\ldots,n \right\rbrace$. There are hence five more special cases corresponding to $i=1,2,\ldots,5$. These cases are treated separately.\\ 
{\textbf{Case 2:}} $i=1$. For $A_1 = \left\lbrace 1,2,3\right\rbrace$ and $B_1 = \left\lbrace 1,2,\ldots,5\right\rbrace$, the results in \eqref{results_x1} and \eqref{results_xi} yield
\begin{align}
\label{case1_i_1}
\nonumber & Q_1 = \frac{1}{n^{\frac{3}{2}}}\left\lbrace\vphantom{(\left(\sup_{\theta:|\theta-\theta_0|\leq\epsilon}\left|l^{(3)}(\theta;\boldsymbol{X})\right|\right)^2}2\left[\left[{\rm E}\left(\left(\xi_1\right)^2\right)\right]^{\frac{3}{2}} + 6{\rm E}\left(\left(\xi_1\right)^2\right)\left[{\rm E}\left(\left(\xi_2\right)^2\right)\right]^{\frac{1}{2}} + 8{\rm E}\left(\left(\xi_2\right)^2\right)\left[{\rm E}\left(\left(\xi_1\right)^2\right)\right]^{\frac{1}{2}}\right]\right.\\
\nonumber &\left.\; + 2\left[\left[{\rm E}\left(\left(\xi_1\right)^4\right)\right]^{\frac{3}{4}} + 6\left[{\rm E}\left(\left(\xi_1\right)^4\right)\right]^{\frac{1}{2}}\left[{\rm E}\left(\left(\xi_2\right)^4\right)\right]^{\frac{1}{4}} + 8\left[{\rm E}\left(\left(\xi_1\right)^4\right)\right]^{\frac{1}{4}}\left[{\rm E}\left(\left(\xi_2\right)^4\right)\right]^{\frac{1}{2}}\right]\right.\\
\nonumber & \left.\; + 3\left[\left[{\rm E}\left(\left(\xi_1\right)^2\right)\right]^{\frac{1}{2}}\left(\left[{\rm E}\left(\left(\xi_1\right)^4\right)\right]^{\frac{1}{2}} + 2 \left[{\rm E}\left(\left(\xi_2\right)^4\right)\right]^{\frac{1}{2}}\right)\right]\vphantom{(\left(\sup_{\theta:|\theta-\theta_0|\leq\epsilon}\left|l^{(3)}(\theta;\boldsymbol{X})\right|\right)^2}\right\rbrace\\
& = \frac{2(k+1)^{\frac{3}{2}}(k+2)^2}{(nk^3+(3n+2)k^2+10k+2)^{\frac{3}{2}}}\left\lbrace\vphantom{(\left(\sup_{\theta:|\theta-\theta_0|\leq\epsilon}\left|l^{(3)}(\theta;\boldsymbol{X})\right|\right)^2} \left(9k+2+6\sqrt{k(k+2)}\right)\left(1+3^{\frac{3}{4}}\right) +3\sqrt{3}(k+1)\vphantom{(\left(\sup_{\theta:|\theta-\theta_0|\leq\epsilon}\left|l^{(3)}(\theta;\boldsymbol{X})\right|\right)^2}\right\rbrace.
\end{align}
{\textbf{Case 3:}} $i=2$. For $A_2 = \left\lbrace 1,2,3,4\right\rbrace$ and $B_2 = \left\lbrace 1,2,\ldots,6\right\rbrace$, a similar approach as the one in Case 2 yields
\begin{equation}
\label{case1_i_2}
Q_2 = \frac{4\sqrt{k}(k+1)^{\frac{3}{2}}(k+2)^{\frac{3}{2}}}{(nk^3+(3n+2)k^2+10k+2)^{\frac{3}{2}}}\left\lbrace\vphantom{(\left(\sup_{\theta:|\theta-\theta_0|\leq\epsilon}\left|l^{(3)}(\theta;\boldsymbol{X})\right|\right)^2} \left(8k+1+4\sqrt{k(k+2)}\right)\left(1+3^{\frac{3}{4}}\right) +2\sqrt{3}(2k+1)\vphantom{(\left(\sup_{\theta:|\theta-\theta_0|\leq\epsilon}\left|l^{(3)}(\theta;\boldsymbol{X})\right|\right)^2}\right\rbrace.
\end{equation}
{\textbf{Case 4:}} $i=3$. Following the same steps as in Case 3, now for $A_3 = \left\lbrace 1,2,\ldots,5\right\rbrace$ and $B_3 = \left\lbrace 1,2,\ldots,7\right\rbrace$, the results in \eqref{results_x1} and \eqref{results_xi} give that
\begin{equation}
\label{case1_i_3}
Q_3 = \frac{\sqrt{k}(k+1)^{\frac{3}{2}}(k+2)^{\frac{3}{2}}}{(nk^3+(3n+2)k^2+10k+2)^{\frac{3}{2}}}\left\lbrace\vphantom{(\left(\sup_{\theta:|\theta-\theta_0|\leq\epsilon}\left|l^{(3)}(\theta;\boldsymbol{X})\right|\right)^2} 2\left(25k+2+10\sqrt{k(k+2)}\right)\left(1+3^{\frac{3}{4}}\right)+5\sqrt{3}(5k+2)\vphantom{(\left(\sup_{\theta:|\theta-\theta_0|\leq\epsilon}\left|l^{(3)}(\theta;\boldsymbol{X})\right|\right)^2}\right\rbrace.
\end{equation}
{\textbf{Case 5:}} $i=4$. In this case, $A_4 = \left\lbrace 2,3,\ldots,6\right\rbrace, B_4 = \left\lbrace 1,2,\ldots,8\right\rbrace$, which lead to
\begin{equation}
\label{case1_i_4}
Q_4 = \frac{5k(k+1)^{\frac{3}{2}}(k+2)^{\frac{3}{2}}}{(nk^3+(3n+2)k^2+10k+2)^{\frac{3}{2}}}\left\lbrace 2\left[\sqrt{k+2}+7\sqrt{k}\right]\left(1+ 3^{\frac{3}{4}}\right) + 5\sqrt{3k}\right\rbrace.
\end{equation}
{\textbf{Case 6:}} $i=5$. Now $A_5 = \left\lbrace 3,4,\ldots,7\right\rbrace$ and $B_5 = \left\lbrace 1,2,\ldots,9\right\rbrace$ to obtain that
\begin{equation}
\label{case1_i_5}
Q_5 = \frac{5k(k+1)^{\frac{3}{2}}(k+2)^{\frac{3}{2}}}{(nk^3+(3n+2)k^2+10k+2)^{\frac{3}{2}}}\left\lbrace 2\left[\sqrt{k+2}+8\sqrt{k}\right]\left(1+ 3^{\frac{3}{4}}\right) + 5\sqrt{3k}\right\rbrace.
\end{equation}
The sum of the results of \eqref{case1_i_1}, \eqref{case1_i_2}, \eqref{case1_i_3}, \eqref{case1_i_4} and \eqref{case1_i_5} with $(n-5)$ times the bound in \eqref{case1_i_i} consists an upper bound for the first three terms of the general upper bound as expressed in \eqref{final_result_local_dependence_RD1}. For the fourth term of the general upper bound, \eqref{Fisher_example_noRD1}, \eqref{var_score_fin} and \eqref{alpha_example_noRD1} yield
\begin{align}
\label{third_term_local_example}
\nonumber \left|\frac{\sqrt{n\,i_2(\theta_0){\rm Var}[l'(\theta_0;\boldsymbol{X})]}}{\alpha} -1 \right| & = \left|\frac{nk}{nk+2}\left[\frac{k+3+\frac{2}{n}+\frac{10}{nk}+\frac{2}{nk^2}}{k+3}\right]^{\frac{1}{2}} - 1\right|\\
& = \left|\left(1-\frac{2}{nk+2}\right)\left[\frac{k+3+\frac{2}{n}+\frac{10}{nk}+\frac{2}{nk^2}}{k+3}\right]^{\frac{1}{2}} - 1\right|.
\end{align}
The fifth term of the bound in \eqref{final_result_local_dependence_RD1} involves the calculation of $S_d(n)$, which is equal to zero from \eqref{score_local_example}. Therefore, the fifth term of the general upper bound vanishes for this example. For the last term we have from \eqref{score_local_example} that ${\rm E}\left[l''(\theta_0;\boldsymbol{S})\right] = -\frac{(nk+2)(k+1)}{(k+2)\sigma^2} = -\alpha$ and therefore
\begin{align}
\nonumber \sqrt{{\rm E}\left[\left(\hat{\theta}_n(\boldsymbol{S}) - \theta_0\right)^2\right]{\rm E}\left[\left(l''(\theta_0;\boldsymbol{S})+\alpha\right)^2\right]} = \sqrt{{\rm E}\left[\left(\hat{\theta}_n(\boldsymbol{S}) - \theta_0\right)^2\right]{\rm Var}\left[l''(\theta_0;\boldsymbol{S})\right]} = 0.
\end{align}
The results of Case 1 - Case 6 and \eqref{third_term_local_example} give the assertion of the corollary.
\end{proof}
{\raggedright{\textbf{\textit{Remarks}}}} Several exciting paths lead from the work explained in this paper. Firstly, treating the case of a vector parameter is the next reasonable step to go (work in progress). Furthermore, other types of dependence structure between the random variables (or vectors) could be investigated to get bounds for the distributional distance of interest. In addition, our theoretical results can be very useful when it comes to applications that satisfy the assumed dependence structure for the data.

\

\noindent ACKNOWLEDGMENTS\vspace{2mm}

\noindent This research occurred whilst Andreas Anastasiou was studying for a D.Phil. at the University of Oxford, supported by a Teaching Assistantship Bursary from the Department of Statistics, University of Oxford, and the Engineering and Physical Sciences Research Council (EPSRC) grant EP/K503113/1. Andreas Anastasiou is currently funded by EPSRC Fellowship EP/L014246/1. The author would like to thank Gesine Reinert for insightful comments and suggestions. 

\vspace{0.2in}
\raggedright{{\textbf{References}}}
\bibliographystyle{elsarticle-num}
\bibliography{Paper_local_dependence_after_Gesine_first_version}

\end{document}